\newcommand{ \R} { \mathbb{R}}
\newtheorem{theorem}{Theorem}[section]
\newtheorem{lemma}[theorem]{Lemma}
\newtheorem{remark}[theorem]{Remark}
\newtheorem{proposition}[theorem]{Proposition}
\newtheorem{corollary}[theorem]{Corollary}
\newtheorem{conjecture}[theorem]{Conjecture}
\numberwithin{equation}{section}
\title[Dancer's conjecture] {On Dancer's conjecture for stable solutions with sign-changing nonlinearity}
\begin{document}
\author[Y. Liu]{Yong Liu}
\address{\noindent Department of Mathematics, University of Science and Technology of China,
Hefei, China, 230026}
\email{yliumath@ustc.edu.cn}

\author[K. Wang]{Kelei Wang}
\address{School of Mathematics and Statistics, Wuhan University, Wuhan, China, 430072}
\email{wangkelei@whu.edu.cn}

\author[J. Wei]{Juncheng Wei}
\address{\noindent Department of Mathematics, University of British Columbia, Vancouver, B.C., Canada, V6T 1Z2}
\email{jcwei@math.ubc.ca}
	
\author[K. Wu]{Ke Wu}
\address{\noindent School of Mathematics and Statistics, Wuhan University, Wuhan, China, 430072}
\email{wukemail@whu.edu.cn}
\begin{abstract}
We establish a Liouville type result for stable solutions for a wide class of second order semilinear elliptic equations in $\mathbb{R}^{n}$ with sign-changing  nonlinearity $f$. Under the hypothesis that the equation does not have any nonconstant one dimensional stable solution, and a further nondegeneracy condition of $f$ at its zero points, we  show that in any dimension,  stable solutions of the equation must be constant.  This partially answers  a question raised by Dancer.
\end{abstract}

\maketitle
\section{Introduction}
We are interested in the classification of stable solutions for general elliptic equation of the form
\begin{equation}\label{eqn}
-\Delta u=f(u),\quad\text{in}\quad\mathbb{R}^{n},
\end{equation}
where the nonlinearity $f$ is assumed to be a $ C^1$ function. Recall that a $C^2$ solution $u$ of this equation is said to be stable,  if and only if
\begin{equation}\label{stability condition}
\int_{\mathbb{R}^{n}}(|\nabla\psi|^{2}- f'(u)\psi^{2})dx\geq 0,\quad\forall\psi\in C_{0}^{\infty}(\mathbb{R}^{n}).
\end{equation}

Stability is an important notion, because many solutions obtained from variational method have finite Morse index, and therefore the classification of stable solutions is useful in the analysis of these solutions. In \cite{Dancer}, Dancer proposed the following stable solution conjecture:
\begin{conjecture}\label{dancer}
Assume $n\leq 8$ and $u$ is a bounded stable solution of the equation \eqref{eqn}.
Then $u$ is one dimensional.
\end{conjecture}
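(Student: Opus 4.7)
The plan is to combine the geometric Poincar\'e inequality of Sternberg--Zumbrun with a blow-down analysis based on Simons' classification of stable minimal cones, following the blueprint developed for the De~Giorgi conjecture in the Allen--Cahn setting and adapting it to a general sign-changing nonlinearity.

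First I would record the basic consequence of stability for solutions of \eqref{eqn}. Taking $\psi=|\nabla u|\eta$ as a test function in \eqref{stability condition}, after a standard approximation near the critical set $\{\nabla u=0\}$, and combining with the Bochner identity
\[
\Delta|\nabla u|^{2}=2|\nabla^{2}u|^{2}+2\nabla u\cdot\nabla\Delta u
\]
together with the equation $-\Delta u=f(u)$, yields the geometric inequality
\[
\int_{\{|\nabla u|>0\}}\bigl(|A|^{2}+|\nabla_{T}|\nabla u||^{2}\bigr)|\nabla u|^{2}\eta^{2}\,dx\ \le\ \int_{\mathbb{R}^{n}}|\nabla u|^{2}|\nabla\eta|^{2}\,dx,
\]
where $A$ is the second fundamental form of the level hypersurfaces of $u$ and $\nabla_{T}$ is the tangential gradient along them. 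Here is precisely where the stability \eqref{stability condition} is used.

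Next I would look for a quadratic energy bound of the form $\int_{B_{R}}|\nabla u|^{2}\le CR^{n-2}$ (or better), using the boundedness of $u$, elliptic regularity, and an integration-by-parts identity for $|\nabla u|^{2}$. Combined with a logarithmic cutoff $\eta_{R}$ satisfying $\int|\nabla\eta_{R}|^{2}\le C/\log R$ in the style of Ambrosio--Cabr\'e, sending $R\to\infty$ would force $|A|\equiv 0$ and $|\nabla u|$ to depend only on $u$ on each connected component of $\{|\nabla u|>0\}$. On such a component the level sets are then hyperplanes, so $u$ is locally one dimensional.

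The principal obstacle is to push this argument up to the critical dimension $n=8$ and, more importantly, to convert ``locally one dimensional'' into ``one dimensional'' for a genuinely sign-changing $f$. For such $f$, $u$ need not be monotone in any direction, the classical Modica inequality is unavailable, and distinct level components may carry different one-dimensional profiles. To deal with this I would perform a blow-down: set $u_{R}(x)=u(Rx)$, extract a limit in the varifold sense for the level sets, and use that stability passes to the limit to conclude that the limit interface is a stable stationary integral varifold. By Simons' theorem, for $n\le 7$ it must be a hyperplane, and for $n=8$ one needs an additional density/sheeting estimate of Bombieri--De~Giorgi--Giusti type. The truly hard step is to transfer planarity of the blow-down back to $u$ itself: this requires ruling out non-planar oscillations of $u$ transverse to the limiting hyperplane, and in the absence of a monotonicity structure on $f$ seems to require both a nondegeneracy hypothesis on $f$ at its zeros (to control the normal profile) and some classification of one-dimensional stable solutions (to identify the profile uniquely). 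These are exactly the two extra hypotheses imposed in the present paper, and explain why the full conjecture remains open.
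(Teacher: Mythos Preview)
This statement is an \emph{open conjecture} in the paper, not a theorem; the paper does not prove it. What the paper actually proves is Theorem~\ref{main1}, which trades the dimension restriction $n\le 8$ for the structural hypotheses {\bf (H1)}--{\bf (H3)} on $f$ (in particular {\bf (H3)} excludes the Allen--Cahn nonlinearity, for which the conjecture is still open in dimensions $3\le n\le 8$). You acknowledge this at the end, so your text is better read as a discussion of the obstacles than as a proof.

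As a plan, there are concrete gaps. The bound $\int_{B_R}|\nabla u|^2\le CR^{n-2}$ does not follow from boundedness and integration by parts: for the one-dimensional heteroclinic extended trivially to $\mathbb{R}^n$ the energy grows like $R^{n-1}$, so without extra hypotheses on $f$ nothing better than $R^{n-1}$ is available in general. Even granting $R^{n-2}$, the log cut-off of Ambrosio--Cabr\'e needs $E(R)\le CR^2$, not $CR^{n-2}$; with the latter the right-hand side of the Sternberg--Zumbrun inequality does not tend to zero once $n\ge 5$, so your first argument collapses precisely where the conjecture is interesting. The blow-down route is also only schematic: for general sign-changing $f$ there is no monotonicity formula and no Modica inequality, hence no a priori reason the rescaled level sets converge to a stationary varifold with the density bounds needed to invoke Simons' theorem, and the step ``transfer planarity of the blow-down back to $u$'' is essentially the content of Savin's improvement-of-flatness machinery, not a consequence of what precedes it. By contrast, the paper's proof of Theorem~\ref{main1} bypasses all of this: under {\bf (H2)}--{\bf (H3)} a good/bad unit-cube decomposition combined with Keller--Osserman decay in the good set yields the iteration \eqref{recursive relation 1}--\eqref{recursive relation 3}, which drives the energy growth down to $E(R)\le CR^2$ in every dimension, after which the log cut-off applies; there is no blow-down and no appeal to Simons.
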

A solution is called one dimensional if it only depends on one direction. Note that  the constant solutions are also one dimensional.  In the Allen-Cahn case, that is, $f(u)=u-u^{3}$, Conjecture \ref{dancer} is closely related to the well-known De Giorgi conjecture concerning the classification of bounded monotone solutions, which are automatically stable:
\begin{conjecture}\label{De-Giorgi}
Let $u$ be a bounded solution of the equation
$$\Delta u+u-u^3=0\quad\text{in}\quad\mathbb{R}^{n}$$
such that $\frac{\partial u}{\partial x_{n}}>0$. Then $u$ is one dimensional, at least if $n\leq 8$.
\end{conjecture}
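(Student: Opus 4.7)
The plan is to convert the monotonicity hypothesis $u_{x_n} > 0$ into a stability estimate and then extract one-dimensionality via a Liouville property for the linearized equation. First, differentiating \eqref{eqn} in $x_n$ gives $-\Delta u_{x_n} = f'(u)\, u_{x_n}$, so $\varphi := u_{x_n} > 0$ is a positive solution of the linearized equation. For any $\psi \in C_0^\infty(\mathbb{R}^n)$, substituting $\psi = \varphi \eta$ into \eqref{stability condition} and integrating by parts yields
\[
\int_{\mathbb{R}^n} (|\nabla \psi|^2 - f'(u) \psi^2)\, dx = \int_{\mathbb{R}^n} \varphi^2 |\nabla \eta|^2\, dx \ge 0,
\]
so $u$ is automatically stable. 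Hence Conjecture \ref{De-Giorgi} becomes a rigidity question for bounded stable solutions under the additional constraint that one fixed directional derivative is strictly positive.

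Next, for each $i = 1, \ldots, n-1$, set $\sigma_i := u_{x_i}/u_{x_n}$, which is smooth since $u_{x_n} > 0$. Because both $u_{x_i}$ and $u_{x_n}$ solve the linearized equation, a direct computation gives
\[
\mathrm{div}(\varphi^2 \nabla \sigma_i) = 0 \quad \text{in } \mathbb{R}^n.
\]
If each $\sigma_i$ is constant, then $u_{x_i} = c_i\, u_{x_n}$, so $u$ depends only on the direction $(c_1, \ldots, c_{n-1}, 1)$, which is exactly one-dimensionality. To prove constancy, I would test the above identity against $\zeta^2 \sigma_i$ with a logarithmic cut-off $\zeta$ supported in $B_{2R}$, producing a Caccioppoli-type inequality
\[
\int_{B_R} \varphi^2 |\nabla \sigma_i|^2\, dx \le \frac{C}{(\log R)^2}\int_{B_{2R}} |\nabla u|^2 \, dx.
\]
A Modica monotonicity bound yields $\int_{B_{2R}} |\nabla u|^2 \, dx = O(R^{n-1})$, so in $n \le 3$ the right-hand side vanishes as $R \to \infty$, forcing $\nabla \sigma_i \equiv 0$. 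This is the Berestycki--Caffarelli--Nirenberg / Ghoussoub--Gui / Ambrosio--Cabr\'e scheme.

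The main obstacle is dimensions $4 \le n \le 8$, where $O(R^{n-1})$ overwhelms the logarithmic gain and the Caccioppoli test function no longer closes. Here one must instead follow Savin's strategy: study the blow-down $u_\lambda(x) := u(\lambda x)$ as $\lambda \to \infty$, use $\Gamma$-convergence of the Allen--Cahn energy to the perimeter functional to show the level sets $\{u_\lambda = 0\}$ converge in Hausdorff distance to a boundary of minimal perimeter, and invoke the Simons-type fact that globally minimizing minimal cones in $\mathbb{R}^n$ with $n \le 8$ are hyperplanes. Rigidity of the hyperplane limit then propagates back through an $\varepsilon$-regularity / sliding argument to force $u$ itself to be one dimensional. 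For $n \ge 9$ the conjecture fails by the del Pino--Kowalczyk--Wei counterexample built on the Bombieri--De Giorgi--Giusti minimal graph, so the dimension restriction $n \le 8$ is sharp.
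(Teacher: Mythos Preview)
The paper does not prove this statement: it is stated as Conjecture~\ref{De-Giorgi} and left open. The surrounding discussion explicitly says that the result is known for $n=2$ (Ghoussoub--Gui), for $n=3$ (Ambrosio--Cabr\'e), and for $4\le n\le 8$ only under the extra limit hypothesis~\eqref{bea} (Savin), and that ``without any additional assumption like~\eqref{bea}, the De Giorgi conjecture is still open for dimensions $4\le n\le 8$.'' So there is no ``paper's own proof'' to compare against.

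Your proposal correctly summarizes the low-dimensional scheme and the Savin program, but it does not constitute a proof of the conjecture as stated. The genuine gap is in the paragraph on $4\le n\le 8$: Savin's blow-down and $\Gamma$-convergence argument needs the level sets of $u$ to converge to a set that is \emph{area-minimizing}, and for this one uses that $u$ is a local minimizer of the energy; that minimality is obtained from the limit condition~\eqref{bea} (via the Alberti--Ambrosio--Cabr\'e observation that a monotone solution with those limits is a minimizer among compact perturbations). Without~\eqref{bea} one does not know minimality, the blow-down need not be an area-minimizing cone, and the Simons-type classification does not apply. Your sketch omits this hypothesis, so it does not close the $4\le n\le 8$ case.

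There is also a smaller inaccuracy in the $n\le 3$ part: the bound $\int_{B_{2R}}|\nabla u|^2=O(R^{n-1})$ is not a consequence of the Modica pointwise inequality (which only gives $O(R^n)$). In $n=2$ one uses the trivial $O(R^2)$ bound together with the log cut-off; in $n=3$ the heart of Ambrosio--Cabr\'e is precisely to upgrade this to $O(R^2)$ by a separate argument. So the line ``A Modica monotonicity bound yields $O(R^{n-1})$'' should be replaced by the correct $n$-dependent justification.
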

De Giorgi conjecture was proved in dimension $n=2$ by Ghoussoub and Gui in \cite{Gh-Gui}. For $n=3$, this is proved by Ambrosio and Cabr\'{e} in \cite{Ambrosio}. Savin proved in \cite{Savin}  that for $4\leq n\leq 8$, Conjecture \ref{De-Giorgi} is true under the additional limit condition that
\begin{equation}\label{bea}
u(x_1 ,...,x_n)\rightarrow\pm1\quad\text{as}\quad x_n\rightarrow\pm\infty.
\end{equation}
For $n\geq 9$, counterexamples have been constructed in \cite{del-Michal-Wei}. We emphasize that without any additional assumption like \eqref{bea},  the De Giorgi conjecture is still open for dimensions $4\leq n\leq 8$.

The properties of stable solutions actually have a delicate dependence on the sign of $f$. In the case that  $f$ is nonnegative, in \cite{Cabre-Figalli},  Cabr\`{e}, Figalli, Ros-Oton and Serra  obtained several a priori estimates on stable solutions. Based on these results, they are able
to solve the Brezis's conjecture (see \cite{Brezis}) on the $L^{\infty}$ regularity of ``extremal solutions''. Later on, in \cite{Dupaigne-Farina}, Dupaigne and Farina used the estimates   in \cite{Cabre-Figalli}
 to prove the following Liouville theorem.
\begin{theorem}
 Assume that $u\in C^{2}(\mathbb{R}^{n})$ is  a stable solution of \eqref{eqn} bounded from below, where $f$ is assumed to be nonnegative. If $n\leq 10$, then $u$ must be a constant.
\end{theorem}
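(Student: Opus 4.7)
The plan is to combine the superharmonicity of $u$ (a direct consequence of $f\geq 0$) with the interior a priori estimates of Cabr\'e, Figalli, Ros-Oton and Serra \cite{Cabre-Figalli}, so as to force $u$ to be first globally bounded and then to have vanishing H\"older oscillation at infinity.

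First, by replacing $u$ with $u-\inf_{\mathbb{R}^{n}}u$ and correspondingly shifting the argument of $f$ (which preserves $C^{1}$ regularity, nonnegativity, and the stability inequality \eqref{stability condition}), I may assume $u\geq 0$. Since $-\Delta u=f(u)\geq 0$, the function $u$ is a nonnegative superharmonic function on $\mathbb{R}^{n}$, so the super-mean-value property gives
$$u(0)\;\geq\;\frac{1}{|B_{R}|}\int_{B_{R}}u\,dx\qquad\text{for every }R>0.$$
This global $L^{1}$-average bound is the key extra ingredient supplied by the assumption $f\geq 0$.

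I would then invoke the CFRS universal interior estimate for stable solutions with $f\geq 0$ (valid for $n\leq 9$), which in its strongest form controls $\|u\|_{L^{\infty}(B_{R})}$ by $C|B_{2R}|^{-1}\int_{B_{2R}}u\,dx$ with $C=C(n)$ independent of $R$. Combined with the superharmonic inequality applied at the origin, this yields $\|u\|_{L^{\infty}(\mathbb{R}^{n})}\leq Cu(0)<\infty$, so $u$ is in fact globally bounded. The companion scale-invariant interior H\"older estimate of \cite{Cabre-Figalli} then reads
$$R^{\alpha}\,[u]_{C^{0,\alpha}(B_{R})}\;\leq\;C\,\|u\|_{L^{\infty}(B_{2R})}\;\leq\;C',$$
and sending $R\to\infty$ forces $[u]_{C^{0,\alpha}(\mathbb{R}^{n})}=0$, hence $u$ is constant.

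The main obstacle is dimensional: the CFRS estimates are established in the range $n\leq 9$, whereas the theorem claims $n\leq 10$. To recover the missing dimension I would argue by contradiction through a blow-down. Assuming $u$ nonconstant, I would extract a nontrivial rescaled limit $u_{\infty}$ on $\mathbb{R}^{n}$ solving $-\Delta u_{\infty}=g(u_{\infty})$ with $g\geq 0$, and either use homogeneity of $u_{\infty}$ to descend to a $(n-1)$-dimensional problem on $S^{n-1}$, or invoke a Federer-type dimension-reduction to fall back on the $n\leq 9$ case. Preserving simultaneously the stability inequality and the sign condition $g\geq 0$ in the limit is the delicate step, and this is precisely where the sharpest ideas of \cite{Dupaigne-Farina} need to be used.
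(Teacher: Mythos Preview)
The paper does not actually prove this theorem: it is quoted as a result of Dupaigne and Farina \cite{Dupaigne-Farina}, obtained from the estimates of \cite{Cabre-Figalli}, and is used only as background motivation. There is therefore no ``paper's own proof'' to compare with; the paper's original contribution is Theorem~\ref{main1}, which concerns sign-changing $f$ and is proved by an entirely different mechanism (the Sternberg--Zumbrun inequality combined with a good/bad cube decomposition and an iteration on the energy growth exponent).

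That said, your outline for $n\leq 9$ is essentially the Dupaigne--Farina argument the paper is citing: superharmonicity gives a uniform $L^{1}$-average bound, CFRS upgrades this to $L^{\infty}$, and the scale-invariant H\"older estimate then forces $u$ to be constant.

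The genuine gap is your treatment of $n=10$. The blow-down you describe does not produce an equation of the same form: rescaling $u_{R}(x)=u(Rx)$ yields $-\Delta u_{R}=R^{2}f(u_{R})$, so there is no limiting nonlinearity $g$ with $-\Delta u_{\infty}=g(u_{\infty})$ unless $f$ has a very special structure, and there is no monotonicity or Weiss-type formula here to force homogeneity of a limit. A Federer dimension-reduction also needs such a monotonicity input, which is absent for general $f\geq 0$. In \cite{Dupaigne-Farina} the extension to $n=10$ does not go through a blow-down; it relies instead on the sharper $W^{1,2+\gamma}$/BMO-type estimates already contained in \cite{Cabre-Figalli}, which in the entire-space setting give just enough decay to close the argument one dimension higher. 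Your last sentence effectively concedes this, but the concrete mechanism you propose before it would not work.
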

In view of Dupaigne and Farina's result, Dancer's conjecture is completely solved when the nonlinearity $f$ is  nonnegative. However, if the nonlinearity $f$ changes sign, to the best of our knowledge,  there are few results on Dancer's conjecture. When the solution is radially symmetric, Cabr\'{e} and Capella \cite{Cabre} proved that if $n\leq 10$, any bounded stable solution of \eqref{eqn}  is a constant solution, regardless of the type of the nonlinearity $f$. In \cite{Dupaigne-Farina1} Dupaigne and Farina established  Liouville type results for some convex nonlinearities. We refer to this paper and its references for more results in this direction. In recent papers \cite{BerestyckiGraham2022,BerestyckiGraham2023}, Berestycki and Graham proved Liouville type and half-space theorems for monostable, ignition and bistable nonlineariries.  Let us also mention that for equations of the special form
	\[-\Delta u+|u|^{q-1}u=|u|^{p-1}u,\]
	where $1\leq q<p$, there are some works on Liouville type results on stable solutions, see \cite{Farina-Yannick},  \cite{Le}, \cite{Selmi}.


In this paper, we will consider Dancer's conjecture for sign-changing nonlinearities. To state our result in a more precise way, let us introduce the following three conditions.
\begin{description}
  \item[{\bf(H1).  Isolated zeros}]

  The zeros of $f$ are isolated, or more precisely,  there exists an integer $m$  such that $f$ vanishes exactly at $m$ points $a_{1}<a_{2}<\cdots < a_{m}$.
  \item[{\bf (H2). Nondegeneracy at stable zeros}]

  There exist  constants $c_{0}, \varepsilon_0>0$ and \begin{equation}
  \label{pupper}
  1\leq p<p^*(n),\end{equation} such that for any $a_j$ satisfying $f^\prime(a_j)\leq 0$, we have
\[c_{0}|t|^{p}\geq-f(a_j+t)sgn(t)\geq c_{0}^{-1}|t|^{p},\quad\text{for}\quad t\in(-\varepsilon_0, \varepsilon_0).\]
Here $sgn$ is the sign function and the exponent $p^*(n)$
is defined by \begin{equation*}
p^*(n)=\left\{\begin{array}{lll}
+\infty,\quad &\text{if}\quad 2\leq n\leq 3,\\
\frac{n(n-2)+4}{n(n-2)-4},\quad &\text{if}\quad 4\leq n.
\end{array}
\right.
\end{equation*}

\item [{\bf (H3).  Non-existence of stable one dimensional profile}]

The ODE
\begin{equation}\label{ODE}
g^{\prime\prime}+f(g)=0\quad\text{in}\quad\mathbb{R}
\end{equation}
does not have any bounded nonconstant stable solution.
\end{description}

Note that monostable and bistable nonlinearies considered in \cite{BerestyckiGraham2022,BerestyckiGraham2023} all satisfy {\bf (H1)}-{\bf (H3)}.

Before proceeding, let us explain these conditions in more details. First of all, unstable zeros of $f$, which by definition are the $a_j$ with $f^\prime(a_j)>0$, are actually not important in our analysis. The reason is that they are unstable constant solutions of \eqref{eqn}.

Secondly, {\bf (H2)} includes the special case when all zeros of $f$ are nondegenerate, i.e. $f^\prime(a_j)\neq 0$ for every $a_j$. In general, {\bf (H2)} can not be removed. This nondegeneracy condition implies that if $a$ is a zero of $f$ such that $f'(a)=0$, then $f$ will change sign around $a$ and has to be negative at the right of $a$ in a small neighbourhood. Now consider the special case $f(u)=u^{p}$, then the equation
$$u''+u^{p}=0,\quad\text{in}\quad\mathbb{R}$$
does not have one dimensional stable solution. However, if $p>p_{\text{JL}}(n)$, then the equation
$$\Delta u+u^{p}=0,\quad\text{in}\quad\mathbb{R}^{n}$$
do have a nonconstant radially symmetric stable solution, where
\begin{equation*}
p_{\text{JL}}(n)=\left\{\begin{array}{lll}
+\infty,\quad &\text{if}\quad 2\leq n\leq 10,\\
\frac{(n-2)^{2}-4n+8\sqrt{n-1}}{(n-2)(n-10)},\quad &\text{if}\quad n\geq 11.
\end{array}
\right.
\end{equation*}
Note that this case does not satisfy the nondegeneracy condition.

On the other hand, we would like to point out that the upper bound on $p$ in (\ref{pupper}) may not be optimal. However, it is still not clear to us what should be the optimal upper bound.

Thirdly, the  Hypothesis {\bf(H3)} will play an important role in our analysis. Roughly speaking, it holds for ``generic'' nonlinearities. To have a better understanding of this condition, we recall that for any solution of \eqref{ODE},  if it is stable, then it is monotone. This fact has already been observed by Dancer.\footnote{This follows by applying Sturm-Liouville comparison theorem to $g^\prime$, which is always a solution to the linearized equation of \eqref{ODE}, that is, an eigenfuction for the quadratic form \eqref{stability condition}  with zero eigenvalue.}  Hence for any bounded stable solution $g$ of \eqref{ODE}, the quantity
\[g_\pm:=\lim_{t\to\pm\infty}g(t)\]
are well-defined. It follows that
\[\frac{1}{2}g^\prime(t)^2\equiv C-F(g(t)),\]
where $F'=f$ and $C$ is a constant determined by $g_\pm$. We then see that
$W(s):=C-F(s)$
is a double-well type potential in the interval $[g_-,g_+]$:
\[0=W(g_\pm)=\min_{s\in[g_-,g_+]}W(s).\]
The above reasoning tells us that if there is a stable solution to \eqref{ODE}, then the equation is essentially of Allen-Cahn type in the corresponding interval. In this sense, our hypothesis exclude the classical Allen-Cahn equation. It is worth pointing out that at this stage, the classification of stable solutions for the Allen-Cahn equation is still open except in dimension $2$.

Now we can state our main result, which is the following
\begin{theorem}\label{main1}
 Under {\bf (H1-H3)}, any bounded stable solution of \eqref{eqn} is a constant.
\end{theorem}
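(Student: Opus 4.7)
The plan is to combine a blow-down analysis with a rigidity argument, using each of (H1)--(H3) for a distinct role. Assume for contradiction that $u$ is a bounded non-constant stable solution. The first step is to establish that $u$ concentrates, in a measure-theoretic sense, near the set of \emph{stable} zeros $\mathcal{S}:=\{a_j : f'(a_j) \le 0\}$: namely, for every $\varepsilon > 0$,
\[
\lim_{R \to \infty} \frac{1}{|B_R|} \bigl| \{x \in B_R : \mathrm{dist}(u(x), \mathcal{S}) > \varepsilon\} \bigr| = 0.
\]
The key input is hypothesis (H2) together with the subcritical exponent $p < p^*(n)$: near a stable zero $a_j$, the equation looks like $-\Delta v = -|v - a_j|^{p-1}(v - a_j)$, and the subcritical growth permits a priori energy and pointwise estimates for stable solutions of this model, in the spirit of the Cabr\'{e}-Figalli-Ros-Oton-Serra bounds for the nonnegative case but applied to the ``attractive'' sign. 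Unstable zeros $a_j$ with $f'(a_j) > 0$ are handled separately, as the equation linearized at such a constant has a negative eigenvalue, which is incompatible with stability on any large ball.

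The second step is a blow-down. Set $u_R(x) := u(Rx)$ and extract a subsequential $L^1_{\mathrm{loc}}$-limit $u_\infty$ as $R \to \infty$. By Step 1, $u_\infty \in \mathcal{S}$ almost everywhere, so $u_\infty$ is essentially a step function whose level sets partition $\mathbb{R}^n$ into finite-perimeter regions of different stable-zero values. At a regular boundary point of any such interface, a secondary blow-up in the original scale produces a bounded stable solution of the ODE (\ref{ODE}) connecting two distinct values in $\mathcal{S}$; this is the standard structure theorem for limits of semilinear equations with isolated zeros of the nonlinearity. Hypothesis (H3) then rules out any such non-constant bounded stable ODE solution, so no interface can exist, and $u_\infty \equiv a_{j_0}$ for a single $a_{j_0} \in \mathcal{S}$.

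The third step promotes $u_R \to a_{j_0}$ to the pointwise conclusion $u \equiv a_{j_0}$. From $u_\infty$ being a constant, one obtains an asymptotic vanishing of the scale-invariant Dirichlet energy,
\[
\lim_{R \to \infty} R^{2-n} \int_{B_R} |\nabla u|^2 \, dx = 0.
\]
Combined with the stability inequality applied to a cutoff of $u - a_{j_0}$ and the nondegeneracy (H2) to absorb the nonlinear term, this forces $\nabla u \equiv 0$, i.e., $u \equiv a_{j_0}$, contradicting non-constancy.

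The main obstacle I anticipate lies in Step 2, specifically in obtaining a sharp enough energy estimate $\int_{B_R} |\nabla u|^2 \le CR^{n-1}$ (of Modica type) to extract a BV limit with meaningful interface structure, and in ensuring that the secondary blow-up at the interface really produces a bounded stable one-dimensional solution of (\ref{ODE}) rather than a higher-dimensional object. Both hinge on the subcritical nature of (H2); the exponent $p^*(n)$ is precisely what is needed so that stable solutions of the model equation $-\Delta v = -|v|^{p-1} v$ admit the requisite reduction to one dimension.
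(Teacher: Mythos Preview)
Your proposal has a genuine gap that the acknowledged ``main obstacle'' in Step~2 makes explicit but does not resolve. The entire blow-down scheme hinges on an a priori Modica-type bound $\int_{B_R}|\nabla u|^2\le CR^{n-1}$ (or at least something subquadratic in $R^n$) in order to extract a BV limit with interfaces. For general sign-changing $f$ no such bound is known; Modica's inequality is specific to double-well potentials, which (H3) expressly excludes. Without this bound, the $L^1_{\mathrm{loc}}$ compactness of $u_R$, the finite-perimeter structure of $u_\infty$, and the interface analysis all collapse. Your Step~1 does not provide it either: the measure-theoretic concentration statement, even if true, controls the volume of the transition region, not the Dirichlet energy.

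There is a second, more structural problem in Step~2. At a regular point of an interface in $u_\infty$, the ``secondary blow-up in the original scale'' produces, a priori, a bounded stable solution of \eqref{eqn} in $\mathbb{R}^n$ that is asymptotic to two distinct stable zeros along one axis; you still have to show this object is one-dimensional before (H3) can be invoked. That reduction to one dimension is precisely the content of the theorem, so the argument is circular unless you supply an independent mechanism. The paper avoids both difficulties: it never performs a blow-down. Instead it uses the Sternberg--Zumbrun inequality to count ``bad'' unit cubes (where the curvature term $|\nabla^2u|^2-|\nabla|\nabla u||^2$ is not small), uses (H2) via a Keller--Osserman comparison to get polynomial decay of $|\nabla u|$ in the ``good'' region away from bad cubes, and iterates the three resulting inequalities on $N(R)$, $D(R)$, $E(R)$ to drive the energy growth down to $E(R)\le CR^2$. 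The subcritical threshold $p<p^*(n)$ enters exactly as the condition that this iteration terminates. Once $E(R)\le CR^2$, a log cut-off in Sternberg--Zumbrun forces one-dimensionality, and (H3) finishes.
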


As a direct consequence of Theorem \ref{main1}, we have the following nonexistence result.

\begin{corollary}Suppose $f$ has a unique zero $a$, and $f'(a)>0$. Then  \eqref{eqn} does not have bounded stable solution.
\end{corollary}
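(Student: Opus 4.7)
The plan is to reduce the corollary to Theorem \ref{main1} by verifying the hypotheses \textbf{(H1)}--\textbf{(H3)}, and then rule out the remaining constant solution using the sign of $f^\prime(a)$.

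First I would check the three hypotheses. \textbf{(H1)} is immediate with $m=1$ and $a_1=a$. \textbf{(H2)} is vacuous, since its quantifier ranges over zeros $a_j$ with $f^\prime(a_j)\leq 0$, and we have assumed $f^\prime(a)>0$. For \textbf{(H3)}, suppose toward contradiction that $g$ is a bounded nonconstant stable solution of the ODE $g^{\prime\prime}+f(g)=0$. As noted in the excerpt, stability forces monotonicity (via Sturm--Liouville applied to $g^\prime$), so the limits $g_\pm:=\lim_{t\to\pm\infty}g(t)$ exist. The first integral $\tfrac12(g^\prime)^2+F(g)\equiv C$ combined with boundedness implies $g^\prime(t)\to 0$ and hence $f(g_\pm)=0$. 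Since $a$ is the unique zero of $f$, we must have $g_-=g_+=a$, and monotonicity of $g$ then forces $g\equiv a$, contradicting the nonconstancy.

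With \textbf{(H1)}--\textbf{(H3)} verified, Theorem \ref{main1} applies to any bounded stable solution $u$ of \eqref{eqn}, forcing $u$ to be a constant. Any constant solution must equal a zero of $f$, hence $u\equiv a$. To finish, I would rule out $u\equiv a$ by a direct failure of the stability condition \eqref{stability condition}: the relevant quadratic form becomes
\[
Q(\psi)=\int_{\mathbb{R}^{n}}\bigl(|\nabla\psi|^{2}-f^\prime(a)\psi^{2}\bigr)\,dx,
\]
and since $f^\prime(a)>0$ is a fixed positive constant, a standard cutoff $\psi_R(x)=\eta(x/R)$ for $\eta\in C_0^\infty$ nontrivial gives $\int|\nabla\psi_R|^2\sim R^{n-2}$ while $\int f^\prime(a)\psi_R^2\sim f^\prime(a)\,R^{n}$, so $Q(\psi_R)<0$ for $R$ large. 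Hence $u\equiv a$ is not stable, and no bounded stable solution exists.

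No step looks substantively difficult: the only place that requires any real content is the verification of \textbf{(H3)}, and even that reduces to the one-line observation that a single zero of $f$ collapses the limits $g_\pm$ into one point, incompatible with a nonconstant monotone profile. Everything else is a direct quotation of Theorem \ref{main1} together with the elementary scaling argument against constant stability.
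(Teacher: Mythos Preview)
Your proof is correct and is precisely the argument the paper leaves implicit: the paper states the corollary only as ``a direct consequence of Theorem \ref{main1}'' with no further detail, and your verification of \textbf{(H1)}--\textbf{(H3)} (the only substantive step being \textbf{(H3)}, handled exactly via the monotonicity/limit argument the paper itself sketches) together with the scaling argument ruling out the sole constant candidate $u\equiv a$ is the natural expansion of that remark.
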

Another consequence of Theorem \ref{main1} is the following result on a half space problem.

\begin{corollary}\label{cormain1}
 Suppose the nonlinearity $f$ satisfies {\bf (H1-H3)} on $(0,+\infty)$. If $f(0)\geq 0$ and  $u$ is a bounded positive stable solution of \eqref{eqn} in $\mathbb{R}^{n+1}_{+}:=\{x_{n+1}>0\}$, with
the boundary condition $u=0$ on $\partial\mathbb{R}^{n+1}_{+}$. Then $u$ is one dimensional.
\end{corollary}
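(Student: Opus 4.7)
The plan is to reduce Corollary \ref{cormain1} to Theorem \ref{main1} via the classical three-step scheme for half-space problems: (i) monotonicity in the normal direction $x_{n+1}$ by a sliding argument, (ii) passage to the limit $x_{n+1}\to+\infty$ to invoke Theorem \ref{main1} on $\mathbb R^n$, and (iii) a horizontal symmetry argument exploiting the positive eigenfunction $v:=\partial_{x_{n+1}}u$ of the linearized operator.

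For step (i), the Berestycki--Caffarelli--Nirenberg monotonicity theorem for bounded positive solutions in a half-space with Dirichlet boundary condition and $f(0)\ge 0$ (proved via the sliding method, comparing $u(x',x_{n+1}+\tau)$ and $u(x',x_{n+1})$ for $\tau>0$) gives $\partial_{x_{n+1}}u>0$ in $\mathbb R^{n+1}_+$. Strict positivity up to the boundary, $v>0$ in $\overline{\mathbb R^{n+1}_+}$, then follows from the strong maximum principle applied to $-\Delta v=f'(u)v$ together with Hopf's lemma.

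For step (ii), monotonicity yields the pointwise limit $u_\infty(x'):=\lim_{x_{n+1}\to+\infty}u(x',x_{n+1})$; interior $C^{2,\alpha}$ regularity on the vertical translates upgrades this to $C^2_{\mathrm{loc}}$ convergence, so $u_\infty$ is a bounded positive solution of $-\Delta u_\infty=f(u_\infty)$ on $\mathbb R^n$, with stability inherited by inserting test functions of the form $\varphi(x')\chi_R(x_{n+1}-t_k)$ in the stability inequality for $u$ and passing to the limit. Since $u_\infty$ takes values in $(0,\infty)$ where (H1)--(H3) hold, Theorem \ref{main1} forces $u_\infty\equiv a$ for some $a>0$ with $f(a)=0$ and $f'(a)\le 0$. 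A one-dimensional supersolution barrier for $a-u$, built from the ODE linearization near $a$ and invoking (H2) decisively in the degenerate case $f'(a)=0$, then yields the \emph{uniform} (in $x'$) decay $\|a-u(\cdot,x_{n+1})\|_{L^\infty(\mathbb R^n)}\to 0$ as $x_{n+1}\to+\infty$.

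For step (iii), fix any horizontal unit vector $e\in\mathbb R^n\times\{0\}$ and set $\phi:=\partial_e u$. Then $\phi$ solves the linearized equation $-\Delta\phi=f'(u)\phi$ in $\mathbb R^{n+1}_+$, vanishes on $\partial\mathbb R^{n+1}_+$, and by elliptic gradient estimates applied to the uniform decay of $a-u$ satisfies $\phi\to 0$ uniformly as $x_{n+1}\to+\infty$. The ratio $\sigma:=\phi/v$ satisfies the degenerate divergence-form equation $\mathrm{div}(v^2\nabla\sigma)=0$ in $\mathbb R^{n+1}_+$ with $\sigma=0$ on the boundary, and testing against $\eta^2\sigma$ for a suitable cutoff yields the Caccioppoli estimate $\int v^2\eta^2|\nabla\sigma|^2\le 4\int\phi^2|\nabla\eta|^2$. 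Combining this bound with the uniform decay of $\phi$ at vertical infinity and the Dirichlet anchor on $\{x_{n+1}=0\}$ forces $\nabla\sigma\equiv 0$, hence $\sigma\equiv 0$ and $\phi\equiv 0$. Since this holds for every horizontal $e$, $u$ depends only on $x_{n+1}$. The main obstacle is closing this Caccioppoli/Liouville argument in the unbounded half-space in general dimension: the crucial inputs are the uniform decay produced in step (ii) via (H2) and the Dirichlet boundary condition, which together compensate for the degeneracy of the weight $v^2$ at vertical infinity.
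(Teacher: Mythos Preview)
Your three-step outline matches the scheme the paper invokes (the paper simply defers to the proof of \cite[Theorem~4]{Dupaigne-Farina}), and steps (i)--(ii) are correct and are exactly the inputs that scheme requires. The problem is in step (iii).

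The Caccioppoli bound $\int v^2\eta^2|\nabla\sigma|^2\le 4\int\phi^2|\nabla\eta|^2$ does not force $\nabla\sigma\equiv 0$ once the horizontal dimension satisfies $n\ge 3$. Even granting the sharp decay $|\phi(x',t)|\le C\,t^{-(p+1)/(p-1)}$ that (H2) yields via Keller--Osserman and interior gradient estimates, one only obtains $\int_0^\infty\sup_{x'}\phi(x',t)^2\,dt<\infty$; a standard cutoff at scale $R$ then gives $\int\phi^2|\nabla\eta|^2\le CR^{-2}\cdot R^{n}=CR^{n-2}$, which fails to vanish for $n\ge 3$ and is only rescued by a log cutoff when $n=2$. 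The Dirichlet condition makes $\sigma$ vanish on the boundary, but the right-hand side of your Caccioppoli bound involves $\phi^2$, not $\sigma^2$, so there is nothing to exploit there. Note too that $v=\partial_{x_{n+1}}u$ decays at vertical infinity at the same rate as $\phi$, so $\sigma=\phi/v$ need not tend to $0$ there, and the weight degeneracy cannot be ``compensated'' in the way you suggest. In effect the vertical decay removes one dimension, leaving an $n$-dimensional Liouville problem that your argument closes only for $n\le 2$.

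What works in every dimension, and is what the Dupaigne--Farina proof uses for the last step, is a Berestycki--Caffarelli--Nirenberg horizontal sliding/moving-planes argument: once the convergence $u(\cdot,t)\to a$ is uniform in $x'$ (your step~(ii)), one slides $u(\cdot+\tau e,\cdot)$ against $u$ in each horizontal direction $e$ and concludes $\partial_e u\equiv 0$. Your preparation in (i)--(ii), in particular the (H2)-based barrier giving uniform decay, is precisely the input this argument needs; only the mechanism in (iii) must be swapped.
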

\begin{remark}
It is conjectured by Berestycki, Caffarelli and Nirenberg in \cite{Berestycki4} that every bounded positive solution of the above half space problem is one dimensional. For results on the conjecture we refer to \cite{Farina2017,Farina2020} and the references therein.
\end{remark}

The proof of Theorem \ref{main1} uses
the Sternberg-Zumbrun (see \cite{1999On} and \cite{1998A}) inequality, which is an equivalent formulation of the stability condition \eqref{stability condition}.  This inequality requires that for any $\phi\in C_0^\infty(\R^n)$,
\begin{equation}\label{Sternberg-Zumbrun}
\int_{\mathbb{R}^{n}}\left(|\nabla^{2}u|^{2}-|\nabla|\nabla u||^{2}\right)\phi^{2}dx\leq \int_{\mathbb{R}^{n}}|\nabla u|^{2}|\nabla\phi|^{2}dx.
\end{equation}
Here
\[|B(u)|^2:=|\nabla^2u|^2-|\nabla|\nabla u||^2\]
can be regarded as a  curvature term. In fact, if the level set $\{u=u(x)\}$ is smooth near $x$, then
\[|B(u)|^2=|A|^2|\nabla u|^2+|\nabla_T|\nabla u||^2,\]
where $A$ is the second fundamental form of this level set, and $\nabla_T$ denotes the tangential derivative.

In our proof, the Sternberg-Zumbrun inequality will be used in the form of a Caccioppoli inequality. It implies that at most places, $u$ is almost one dimensional (i.e. $|B(u)|^2$ is small). Then we take a cube decomposition of $\R^n$, and divide cubes into bad ones and good ones, depending whether the integral of $|B(u)|^2$ on the cube is small. The integral in the left hand side of \eqref{Sternberg-Zumbrun} can be used to bound the number of bad cubes, while in good cubes, by the nondegeneracy hypothesis {\bf (H2)}, $|\nabla u|$ has a decay away from the bad set. These two estimates are combined to perform an iteration, which eventually leads to a quadratic bound on the energy growth.  We then conclude the proof by a standard log cut-off function technique.

{\bf Notation.}   For every point $\bar{x}=(\bar{x}_{1},\bar{x}_{2},\cdots,\bar{x}_{n})\in\mathbb{R}^{n}$, let
\[Q_{r}(\bar{x})=\left\{x\in\mathbb{R}^{n}:|x_{i}-\bar{x}_{i}|\leq \frac{r}{2},~i=1,2,\cdots, n\right\}\]
be the closed cube centered at $\bar{x}$ with  side length $r$ .

We assume the zeros of $f$ satisfy, for any $j$, $a_{j+1}-a_j\geq 3\varepsilon_0$, where $\varepsilon_0$ is the constant in {\bf (H2)}.

\textbf{Acknowledgement} Y. Liu is supported by  the National Key R\&D Program of China 2022YFA1005400 and NSFC 11971026, NSFC 12141105. K. Wang is supported by  National Key R\&D Program of China (No. 2022YFA1005602) and NSFC (No. 12131017 and No. 12221001). J. Wei is supported by NSERC of Canada.

\section{Proof of Theorem \ref{main1}}
The key in our proof is to use an iteration scheme to establish the quadratic energy growth estimate
$$\int_{Q_R(0)}|\nabla u|^2\leq CR^2.$$
Once this is proved, a standard argument will lead to the desired theorem. Hence in the rest of this section, we assume without loss of generality that $n\geq 3$, since in the dimension two case, the energy automatically has at most quadratic growth in terms of $R$. At this stage, it is worth pointing out that for the one dimensional heteroclinic solution (which is stable) of the Allen-Cahn equation, its energy grows like $O(R^{n-1})$.

We begin with the following
\begin{lemma}\label{lem1}
There exists a positive constant $C$ such that for any $x\in\R^n$ and every $R>1$,
\begin{equation}\label{Sternberg-Zumbrun 2}
\int_{Q_{R}(x)}\left(|\nabla^{2} u|^{2}-|\nabla|\nabla u||^{2}\right)\leq CR^{-2}\int_{Q_{2R}(x)}|\nabla u|^2.
\end{equation}
\end{lemma}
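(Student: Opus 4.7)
The plan is to derive the lemma as a direct Caccioppoli-type consequence of the Sternberg-Zumbrun inequality \eqref{Sternberg-Zumbrun}, by choosing a standard cube cutoff.

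First, I would fix $x\in\mathbb{R}^n$ and $R>1$ and select a cutoff $\phi\in C_0^\infty(\mathbb{R}^n)$ with $0\le\phi\le 1$, $\phi\equiv 1$ on $Q_R(x)$, $\phi\equiv 0$ outside $Q_{2R}(x)$, and $|\nabla\phi|\le C/R$ (such a $\phi$ is obtained by mollifying the obvious piecewise linear function in each coordinate). By elliptic regularity and the boundedness of $u$, $|\nabla u|$ and $|\nabla^2 u|$ are locally bounded, so plugging this $\phi$ into \eqref{Sternberg-Zumbrun} is legitimate.

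Next, I would use the pointwise nonnegativity $|B(u)|^2=|\nabla^2 u|^2-|\nabla|\nabla u||^2\ge 0$ (which is already recorded in the excerpt via the geometric decomposition in terms of the second fundamental form and the tangential gradient of $|\nabla u|$, or alternatively by Kato's inequality). This allows me to bound
\[
\int_{Q_R(x)}\bigl(|\nabla^2 u|^2-|\nabla|\nabla u||^2\bigr)\,dx\le\int_{\mathbb{R}^n}\bigl(|\nabla^2 u|^2-|\nabla|\nabla u||^2\bigr)\phi^2\,dx.
\]
Applying \eqref{Sternberg-Zumbrun} on the right and using $|\nabla\phi|\le C/R$ together with $\operatorname{supp}\phi\subset Q_{2R}(x)$ yields
\[
\int_{Q_R(x)}\bigl(|\nabla^2 u|^2-|\nabla|\nabla u||^2\bigr)\,dx\le\int_{\mathbb{R}^n}|\nabla u|^2|\nabla\phi|^2\,dx\le\frac{C}{R^2}\int_{Q_{2R}(x)}|\nabla u|^2\,dx,
\]
which is exactly \eqref{Sternberg-Zumbrun 2}.

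There is essentially no main obstacle here: the lemma is a routine translation of the stability identity into local form with the standard cube cutoff. The only subtlety worth stating carefully is the pointwise inequality $|\nabla^2 u|^2\ge|\nabla|\nabla u||^2$, so that dropping $\phi^2$ on the cube $Q_R(x)$ can only decrease the left-hand side; all remaining steps are bookkeeping.
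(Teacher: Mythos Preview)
Your proposal is correct and matches the paper's own proof: both simply insert a standard cutoff equal to $1$ on $Q_R(x)$, vanishing outside $Q_{2R}(x)$, with $|\nabla\phi|\le C/R$, into the Sternberg--Zumbrun inequality \eqref{Sternberg-Zumbrun}. Your explicit remark that the pointwise nonnegativity $|\nabla^2 u|^2-|\nabla|\nabla u||^2\ge 0$ is what allows one to replace $\phi^2$ by $1$ on $Q_R(x)$ is the only detail the paper leaves implicit.
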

\begin{proof}
  This follows from substituting a standard cut-off function into Sternberg-Zumbrun inequality \eqref{Sternberg-Zumbrun}. More precisely, let $\eta$ be a cutoff function such that $\eta(s)=1$ for $|s|<1$, and $\eta(s)=0$ for $|s|>2$. Then we simply choose $\phi(\cdot):=\eta(|\cdot-x|/R)$ in \eqref{Sternberg-Zumbrun}.
\end{proof}

Since $u$ is bounded, standard regularity theories (see \cite{Gi-Tr}) imply that there exists a constant $C$ such that
\begin{equation}\label{gradient bound}
|\nabla u|\leq C  \quad\text{in}\quad\mathbb{R}^{n}.
\end{equation}
Plugging \eqref{gradient bound} into \eqref{Sternberg-Zumbrun 2}, we get
\begin{equation}\label{first bound on curvature}
  \int_{Q_R(x)}\left(|\nabla^{2} u|^{2}-|\nabla|\nabla u||^{2}\right)\leq CR^{n-2}.
\end{equation}
This bound will be the  starting point of our iteration procedure, where the exponent $n-2$ in the right hand side will eventually be decreased  to a negative one.

\begin{lemma}\label{lem2}
For any $\varepsilon>0$,  there exists a $\delta>0$ such that, for any $x\in\R^n$, if
\begin{equation}\label{pro1eq1}
\int_{Q_1(x)}(|\nabla^{2} u|^{2}-|\nabla|\nabla u||^{2})<\delta,
\end{equation}
then there exists a stable zero $a_j$ such that
 \begin{equation}\label{pro1eq2}
 \|u-a_{j}\|_{L^{\infty}(Q_1(x))}< \varepsilon.
 \end{equation}
\end{lemma}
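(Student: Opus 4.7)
The natural approach is by compactness and contradiction. Suppose the conclusion fails. Then there exist $\varepsilon_0>0$, a sequence $\delta_k\to 0$, and a sequence of centers $x_k\in\R^n$ such that $\int_{Q_1(x_k)}|B(u)|^2<\delta_k$, yet $\|u-a_j\|_{L^\infty(Q_1(x_k))}\ge\varepsilon_0$ for every stable zero $a_j$ of $f$. Define the translates $u_k(y):=u(x_k+y)$; each $u_k$ is a bounded stable solution of \eqref{eqn} on $\R^n$ with the same $L^\infty$ bound. Since $f\in C^1$ and $u$ is bounded, elliptic Schauder estimates applied iteratively give uniform $C^{3,\alpha}_{loc}$ bounds on the $u_k$, so after passing to a subsequence $u_k\to u_\infty$ in $C^{2}_{loc}(\R^n)$. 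The limit $u_\infty$ is bounded, solves $-\Delta u_\infty=f(u_\infty)$, and remains stable because $f'(u_k)\to f'(u_\infty)$ locally uniformly.

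Passing to the limit in $\int_{Q_1(0)}|B(u_k)|^2<\delta_k$ (using the $C^2$ convergence), we obtain $|B(u_\infty)|\equiv 0$ on $Q_1(0)$. I would then exploit the algebraic meaning of $|B(u)|^2=0$: at points where $\nabla u_\infty\neq 0$, the identity $|B|^2=|A|^2|\nabla u_\infty|^2+|\nabla_T|\nabla u_\infty||^2=0$ forces the level sets to be flat and $|\nabla u_\infty|$ to be constant along them; a short computation then shows that the unit normal $\nu=\nabla u_\infty/|\nabla u_\infty|$ satisfies $\nabla\nu=0$ on each connected component of $\{\nabla u_\infty\neq 0\}\cap Q_1(0)$, so $u_\infty$ is locally one dimensional in a constant direction there. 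Consequently one of two alternatives holds on $Q_1(0)$: either $\nabla u_\infty\equiv 0$ (so $u_\infty$ is constant on $Q_1(0)$), or there exists a unit vector $\nu$ and a direction $e\perp\nu$ such that $\partial_e u_\infty\equiv 0$ on some open subset of $Q_1(0)$.

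In either case I would use unique continuation for the linearized equation $-\Delta v=f'(u_\infty)v$, which has bounded coefficients, applied to $v=\partial_e u_\infty$ (or each partial derivative $\partial_i u_\infty$ in the first case). This promotes the local vanishing to $\partial_e u_\infty\equiv 0$ on all of $\R^n$ for every $e\perp\nu$, so $u_\infty(y)=h(y\cdot\nu)$ globally for some bounded $h\in C^2(\R)$ solving $h''+f(h)=0$. Stability of $u_\infty$ transfers to stability of $h$ as a solution of \eqref{ODE}: insert the test function $\psi(y)=\phi(y\cdot\nu)\chi_R(y-(y\cdot\nu)\nu)$ with $\chi_R$ a standard cut-off on $B_R\subset\R^{n-1}$, divide by $R^{n-1}$, and let $R\to\infty$; the gradient error on $\chi_R$ contributes only $O(R^{-2})$. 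Hypothesis {\bf (H3)} then forces $h$ to be constant, so $u_\infty\equiv c$ for some zero $c=a_j$ of $f$. Since a constant $a_j$ with $f'(a_j)>0$ is not a stable solution on $\R^n$ (first Dirichlet eigenvalue on $B_R$ tends to $0$), the limit $a_j$ must be a stable zero. This contradicts $\|u_\infty-a_j\|_{L^\infty(Q_1(0))}\ge\varepsilon_0$, which passes to the limit from the assumption on $u_k$.

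The main obstacle I anticipate is the step passing from ``locally one dimensional on $Q_1(0)$'' to ``globally one dimensional on $\R^n$.'' The linear unique continuation argument applied to a tangential derivative of $u_\infty$ handles it cleanly, provided I am careful that the open set on which $\partial_e u_\infty$ vanishes is nonempty; verifying that the direction $\nu$ is genuinely constant on a connected component of $\{\nabla u_\infty\neq 0\}\cap Q_1(0)$ (rather than varying from point to point) is the delicate geometric point and is settled by the explicit computation $\nabla\nu=0$ sketched above.
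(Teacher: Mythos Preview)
Your proposal is correct and follows essentially the same compactness--contradiction route as the paper: translate, pass to a $C^2_{loc}$ limit, use $|B(u_\infty)|^2=0$ on $Q_1(0)$ plus unique continuation to deduce global one-dimensionality, and then invoke {\bf (H3)}. You supply more detail than the paper on several steps the authors treat tersely---namely the geometric analysis of $|B|^2=0$, the precise formulation of unique continuation via tangential derivatives, the transfer of stability to the one-dimensional profile, and the observation that the limiting constant must be a \emph{stable} zero---but the underlying argument is the same.
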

\begin{proof}
 Assume to the contrary that there is a sequence of points $x^{(k)}$  such that
\begin{equation}\label{pro1eq3}
\int_{Q_1(x^{(k)})}(|\nabla^{2} u|^{2}-|\nabla|\nabla u||^{2})\to 0,
\end{equation}
but for any stable zeros $a_j$,
\begin{equation}\label{pro1eq4}
\|u-a_{j}\|_{L^{\infty}(Q_1(x^{(k)}))}\geq \varepsilon.
\end{equation}
Let $u_{k}(x)=u(x+x^{(k)})$, which is a bounded stable solution of \eqref{eqn} satisfying
\begin{equation}\label{pro1eq5}
\int_{Q_1(0)}(|\nabla^{2} u_{k}|^{2}-|\nabla|\nabla u_{k}||^{2})\to 0.
\end{equation}
Since $\{u_{k}\}$ are uniformly bounded,  combining standard elliptic estimates with  Arzel\`{a}-Ascoli theorem, we know that there exists a function $u_{\infty}\in C^{2}(\R^n)$ such that up to a suitable subsequence, $\{u_{k}\}$ converge to $u_{\infty}$ locally uniformly in $\R^n$.
Passing to the limit in \eqref{pro1eq5} gives
\begin{equation}\label{pro1eq6}
\int_{Q_1(0)}(|\nabla^{2} u_{\infty}|^{2}-|\nabla|\nabla u_{\infty}||^{2})=0.
\end{equation}
Hence $u_\infty$ is one dimensional in $Q_1(0)$. By unique continuation, it is one dimensional in the whole $\R^n$. By {\bf(H3)}, there exists a stable zero $a_j$ such that $u_\infty\equiv a_j$.
As a consequence,
\[\lim_{k\to\infty}\|u-a_{j}\|_{L^{\infty}(Q_1(x^{(k)}))}=\lim_{k\to\infty}\|u_k-a_{j}\|_{L^{\infty}(Q_1(0))}=0.\]
This is a contradiction with \eqref{pro1eq4}.
\end{proof}
Let
\[\mathbb{Z}^{n}=\{(k_{1}, k_{2}, \cdots, k_{n})\in\mathbb{R}^{n}:k_{i}\in\mathbb{Z}, ~i=1,2,\cdots n\}\]
be the integer lattice in $\R^n$. Points in $\mathbb{Z}^n$ are denoted as $\textbf{k}=(k_{1}, k_{2}, \cdots, k_{n})$.  The standard distance function on $\mathbb{Z}^n$ is denoted by $\text{dist}$.

{\bf Construction of good and bad cubes.} Take $\delta_0$ to be the small constant in Lemma \ref{lem2} with $\varepsilon=\varepsilon_0$. Let
\[
\begin{aligned}
\mathcal{B}&=\left\{\textbf{k}\in \mathbb{Z}^n: ~~ \int_{Q_1(\textbf{k})}(|\nabla^{2} u|^{2}-|\nabla|\nabla u||^{2})dx\geq \delta_0\right\},\\
\mathcal{G}&=\left\{\textbf{k}\in \mathbb{Z}^n: ~~ \int_{Q_1(\textbf{k})}(|\nabla^{2} u|^{2}-|\nabla|\nabla u||^{2})dx<\delta_0\right\}.
\end{aligned}
\]
The corresponding bad and good cubes form the sets
\[
\widehat{\mathcal{B}}=\mathop{\cup}\limits_{\textbf{k}\in\mathcal{B}}Q_1(\textbf{k}), \quad
\widehat{\mathcal{G}}=\mathop{\cup}\limits_{\textbf{k}\in\mathcal{G}}Q_1(\textbf{k}).
\]
We first assume that $\mathcal{B}\neq\emptyset$.

For each $R\in\mathbb{N}$, define
\[
\begin{aligned}
N(R)&:=\sharp\left( \mathcal{B}\cap Q_R(0)\right),\\
D(R)&:=\int_{Q_R(0)}\left(|\nabla^2 u|^2-|\nabla|\nabla u||^2\right),\\
E(R)&:=\int_{Q_R(0)}|\nabla u|^2.
\end{aligned}
\]
By Lemma \ref{lem1}, we have the following estimates for these three quantities as the starting point:
\begin{equation}\label{starting point}
	\left\{\begin{aligned}
		& N(R)\leq CR^{n-2},\\
		&D(R)\leq CR^{n-2},\\
		&E(R) \leq CR^n.
	\end{aligned}\right.
\end{equation}

These three quantities are actually intimately related to each other. Indeed, Lemma \ref{lem1} gives
\begin{equation}\label{recursive relation 1}
D(R)\leq CR^{-2}E(2R).
\end{equation}
On the other hand, by a counting of bad cubes, we  have
\begin{equation}\label{recursive relation 2}
	N(R)\leq CD(R).
\end{equation}
To close the loop, we need one more recursive relation to control  $E$ by $N$ . This is the content of the following
\begin{proposition}\label{lem3}Let $p$ be the constant defined in {\bf (H2)}.
	For any $R$ large, $10<L\ll R$, we have
	\begin{equation}\label{recursive relation 3}
		E(R) \leq CL^n N(2R)+CL^{-2\frac{p+1}{p-1}} R^n,
	\end{equation}	where the constant $C$ does not depend on $L$ and $R$.
\end{proposition}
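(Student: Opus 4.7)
The plan is to split $Q_R(0)$ into the $L$-neighborhood $\mathcal{N}_L$ of the bad set $\widehat{\mathcal{B}}$ and its complement, then bound $\int|\nabla u|^2$ on each piece separately. On $\mathcal{N}_L\cap Q_R(0)$ I combine the uniform gradient bound \eqref{gradient bound} with the volume estimate $|\mathcal{N}_L\cap Q_R(0)|\leq CL^n N(R+L)\leq CL^n N(2R)$, obtained by inflating every bad cube intersecting $Q_{R+L}(0)$ into a cube of side $\sim L$; this supplies the first term in \eqref{recursive relation 3}. On the complement my target is the pointwise estimate
\[|\nabla u(x)|^2 \leq CL^{-2(p+1)/(p-1)},\]
which, once integrated over $Q_R(0)$, produces the second term.

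For the pointwise estimate, suppose $x$ has $\ell^\infty$-distance greater than $L$ from every bad cube, so that the surrounding cube $Q_{2L}(x)$ lies entirely in the good set $\widehat{\mathcal{G}}$. Applied with $\varepsilon=\varepsilon_0$ and the threshold $\delta_0$ fixed above, Lemma \ref{lem2} attaches to each good lattice cube $Q_1(\textbf{k})\subset Q_{2L}(x)$ a stable zero $a_{j(\textbf{k})}$ with $\|u-a_{j(\textbf{k})}\|_{L^\infty(Q_1(\textbf{k}))}<\varepsilon_0$. Two adjacent good cubes share an $(n-1)$-dimensional face on which $u$ would have to lie within $\varepsilon_0$ of two stable zeros simultaneously; since consecutive stable zeros are separated by at least $3\varepsilon_0$, these zeros coincide, and a lattice-connectivity argument produces a single $a_j$ with $|u-a_j|<\varepsilon_0$ on all of $Q_{2L}(x)$. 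Writing $v:=u-a_j$, the sign condition in {\bf(H2)} gives $vf(a_j+v)\leq -c_0^{-1}|v|^{p+1}$, so $w:=v^2\geq 0$ is a subsolution of a superlinear elliptic inequality:
\[\Delta w = 2|\nabla v|^2 - 2vf(a_j+v) \geq 2c_0^{-1} w^{(p+1)/2}.\]
The classical Keller--Osserman estimate for $\Delta w\geq cw^q$ with $q=(p+1)/2>1$, applied on a ball $B_{L/2}(y)\subset Q_{2L}(x)$ centered at any $y\in B_{L/2}(x)$, yields $w\leq CL^{-4/(p-1)}$ throughout $B_{L/2}(x)$, i.e.\ $|v|\leq CL^{-2/(p-1)}$ there. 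Feeding these bounds, together with $|f(a_j+v)|\leq C|v|^p\leq CL^{-2p/(p-1)}$, into the standard interior gradient estimate for $-\Delta v=f(a_j+v)$, the two terms $L^{-1}\|v\|_{L^\infty}$ and $L\|f(a_j+v)\|_{L^\infty}$ both equal $CL^{-(p+1)/(p-1)}$, yielding the desired pointwise control on $|\nabla u(x)|$.

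The main obstacle is the pointwise decay step, because this is where the sign-changing hypothesis {\bf(H2)} is used decisively. The trick is to pass from $v$ (which is not one-signed in general) to $w=v^2$ before invoking Keller--Osserman: only after squaring do we have a bona fide nonnegative subsolution of a superlinear elliptic inequality. This squaring dictates the exponent $4/(p-1)$ on $w$ and, once the scaling of the interior gradient estimate is folded in, produces the exponent $2(p+1)/(p-1)$ that appears in \eqref{recursive relation 3}. The identification of a single stable zero $a_j$ on the large cube $Q_{2L}(x)$ is a second delicate point; it depends on the gap assumption $a_{j+1}-a_j\geq 3\varepsilon_0$ fixed at the end of Section~1, which is precisely what prevents transitions between adjacent good cubes.
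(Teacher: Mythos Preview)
Your proof is correct and follows essentially the same route as the paper: split $Q_R(0)$ into the $L$-neighborhood of the bad set and its complement, use the volume bound plus the uniform gradient bound on the former, and on the latter identify a single stable zero on a large cube and apply Keller--Osserman followed by interior gradient estimates. The only difference is that the paper works with $|v|$ and the inequality $\Delta|v|\geq c_1|v|^p$, whereas you square first and use $\Delta(v^2)\geq 2c_0^{-1}(v^2)^{(p+1)/2}$; both lead to the same decay $|v|\leq CL^{-2/(p-1)}$, and your version has the mild advantage of sidestepping the distributional meaning of $\Delta|v|$ on $\{v=0\}$.
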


To prove this proposition, we need a decay estimate of $|\nabla u|$ in the good set $\widehat{\mathcal{G}}$. This is the content of the following
\begin{lemma}\label{lem decay in good set}
	There exists a constant $C$ such that for any $\textbf{k}\in \mathcal{G}$, if $\text{dist}\left(\textbf{k}, \mathcal{B}\right)>10$, then
	\begin{equation}\label{decay for gradient}
		\sup_{Q_1(\textbf{k})}|\nabla u| \leq C\text{dist}\left(\textbf{k}, \mathcal{B}\right)^{-\frac{p+1}{p-1}}.
	\end{equation}
\end{lemma}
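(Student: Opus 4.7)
My plan rests on three steps carried out on the Euclidean ball $B_R(\textbf{k})$ with $R\sim d:=\text{dist}(\textbf{k},\mathcal{B})$: (i) identify, via Lemma \ref{lem2} and the spacing hypothesis on the zeros of $f$, a \emph{single} stable zero $a_j$ such that $u$ stays within $\varepsilon_0$ of $a_j$ on $B_R(\textbf{k})$; (ii) use {\bf (H2)} to derive a Keller--Osserman type pointwise decay for $u-a_j$; (iii) promote that decay to a gradient bound via the interior estimate for Poisson's equation. Since the lemma is only claimed for $d>10$, I tacitly assume $d$ is larger than a suitable dimensional constant and set $R:=d/4$.

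For step (i), every $x\in B_R(\textbf{k})$ lies in a lattice cube $Q_1(\textbf{k}')$ with $\text{dist}(\textbf{k}',\textbf{k})<d$, so $\textbf{k}'\in\mathcal{G}$ and $B_R(\textbf{k})\subset\widehat{\mathcal{G}}$. Lemma \ref{lem2} with $\varepsilon=\varepsilon_0$ attaches to each such $\textbf{k}'$ a stable zero $a_{j(\textbf{k}')}$ with $\|u-a_{j(\textbf{k}')}\|_{L^\infty(Q_1(\textbf{k}'))}<\varepsilon_0$. Connectedness of $B_R(\textbf{k})$, continuity of $u$ across faces shared by adjacent cubes, and the pairwise disjointness of the intervals $(a_i-\varepsilon_0,a_i+\varepsilon_0)$ guaranteed by $a_{i+1}-a_i\geq 3\varepsilon_0$ together force $j(\textbf{k}')$ to be a single constant $j$. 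Hence $|u-a_j|<\varepsilon_0$ uniformly on $B_R(\textbf{k})$.

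For step (ii), set $w:=(u-a_j)^2$. Combining $\Delta w=2(u-a_j)\Delta u+2|\nabla u|^2$ with the PDE $-\Delta u=f(u)$ and {\bf (H2)}, which gives $-(u-a_j)f(u)\geq c_0^{-1}|u-a_j|^{p+1}$ for $|u-a_j|<\varepsilon_0$, yields
\begin{equation*}
\Delta w\geq 2c_0^{-1}w^{(p+1)/2}\quad\text{in }B_R(\textbf{k}).
\end{equation*}
Assume $p>1$ (the borderline case $p=1$ gives exponential decay by linear theory and is easier). With $q:=(p+1)/2>1$, compare $w$ to the explicit radial supersolution
\begin{equation*}
W(y)=A\bigl(R^2-|y-\textbf{k}|^2\bigr)^{-2/(q-1)}
\end{equation*}
of $\Delta W=2c_0^{-1}W^q$, where $A\sim R^{2/(q-1)}$ is chosen via a direct radial ODE check to ensure $\Delta W\leq 2c_0^{-1}W^q$. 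Since $W$ blows up on $\partial B_R(\textbf{k})$ and $w$ is uniformly bounded, comparing on $B_{R-\epsilon}(\textbf{k})$ and letting $\epsilon\to 0$ yields $w\leq W$ in $B_R(\textbf{k})$, and at any $x\in Q_1(\textbf{k})\subset B_{R/2}(\textbf{k})$ this gives
\begin{equation*}
|u-a_j|(x)\leq CR^{-2/(p-1)}\leq Cd^{-2/(p-1)}.
\end{equation*}

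For step (iii), apply the standard interior $L^\infty$-gradient estimate for Poisson's equation to $u-a_j$, which solves $-\Delta(u-a_j)=f(u)$, on the ball $B_{R/4}(x)\subset B_R(\textbf{k})$ for each $x\in Q_1(\textbf{k})$. Using the just-obtained bound together with $|f(u)|\leq c_0|u-a_j|^p\leq Cd^{-2p/(p-1)}$ from {\bf (H2)},
\begin{equation*}
|\nabla u(x)|\leq C\bigl(R^{-1}\sup|u-a_j|+R\sup|f(u)|\bigr)\leq Cd^{-(p+1)/(p-1)},
\end{equation*}
uniformly in $x\in Q_1(\textbf{k})$, as claimed. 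The main technical obstacle is step (ii): building the Keller--Osserman barrier and justifying the comparison principle in the presence of the boundary singularity of $W$; step (i) is also delicate, as the global identification of a single $a_j$ on $B_R(\textbf{k})$ relies on the interplay between connectedness of the good region and the quantitative spacing in the zero set of $f$.
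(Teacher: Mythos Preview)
Your proof is correct and follows essentially the same three-step strategy as the paper: identify a single stable zero $a_j$ on a large cube around $\textbf{k}$, derive Keller--Osserman decay for $u-a_j$, and upgrade via interior gradient estimates. The only cosmetic difference is that the paper works directly with $|v|$ (obtaining $\Delta|v|\geq c_1|v|^p$ and then citing Keller--Osserman theory together with a doubling/blow-up argument), whereas you square to $w=(u-a_j)^2$ and construct an explicit radial barrier---a slightly more self-contained route that also sidesteps the distributional issue at $\{v=0\}$.
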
	
\begin{proof}
Let us set $D:=\text{dist}\left(\textbf{k}, \mathcal{B}\right)/2$.  By Lemma \ref{lem2}, for any $x\in Q_D(\textbf{k})$,  there exists a stable zero $a_x$ of $f$ such that
	\[\sup_{Q_1(x)}|u-a_x|\leq \varepsilon_0.\]
	Because $u$ is continuous and zeros of $f$ are isolated,  this zero is the same one for all these $x$. In other words, there exists a stable zero $a$ of $f$ such that
	\[\sup_{Q_D(\textbf{k})}|u-a|\leq \varepsilon_0.\]
	
Now let us define
	\[v(x)=u(x)-a,\quad\text{in}\quad Q_D(\textbf{k}).\]
	For any $x\in Q_D(\textbf{k})$, if $u(x)>a$, then by {\bf (H2)},
	\[\Delta v(x)=\Delta u(x)=-f(v(x)+a)\geq c_{1}|v(x)|^{p}.\]
	In the same way,  if $u(x)<a$, then
	$$-\Delta v(x)=-\Delta u(x)=f(v(x)+a)\geq c_{1}|v(x)|^{p}.$$
	Summing up, we conclude that $v$ satisfies
	\begin{equation}\label{decay equation}
		\Delta|v|\geq c_{1}|v|^{p},\quad\text{in}\quad  Q_D(\textbf{k}).
	\end{equation}
	Then by the Keller-Osserman theory (see \cite{Keller, Osserman})  and a standard blow up argument using the doubling lemma (see \cite{Pe-Pa} ), we can find a constant $C$, which depends only on $p$, such that
	\begin{equation}\label{decay of v}
		\sup_{Q_{D/2}(\textbf{k})}|v|\leq CD^{-\frac{2}{p-1}}.
	\end{equation}
	The estimate \eqref{decay for gradient} then follows from applying standard interior gradient estimate to $u$ in $Q_{D/2}(\textbf{k})$. 
\end{proof}
\begin{remark}
	If $p=1$, \eqref{decay for gradient} should be replaced by
	\begin{equation}\label{decay for gradient, w}
		\sup_{Q_1(\textbf{k})}|\nabla u| \leq Ce^{-c\text{dist}\left(\textbf{k}, \mathcal{B}\right)}.
	\end{equation}
This follows from a similar a priori estimate for the $p=1$ case of \eqref{decay equation}.
\end{remark}

\begin{proof}[Proof of Proposition \ref{lem3}]	
We consider the good and bad set separately.

First of all, let us use
 $\mathcal{B}_L$ to denote the $L$ neighborhood of $\mathcal{B}$ in the lattice $\mathbb{Z}^n$, and $\widehat{\mathcal{B}}_L$ will represent the corresponding domain in $\mathbb{R}^n$. We have
\begin{equation}\label{volume estimate of bad neighborhood}
	\left|\widehat{\mathcal{B}}_L\cap Q_{2R}(0)\right| \leq \sharp \left(\mathcal{B}_L\cap Q_{2R}(0)\right) \leq CL^n  N(2R).
\end{equation}
Then combining \eqref{volume estimate of bad neighborhood} with the gradient bound \eqref{gradient bound}, we get
\begin{equation}\label{energy estimate in bad neighborhod}
	\int_{\widehat{\mathcal{B}}_L\cap Q_{R}(0)}|\nabla u|^2 \leq CL^n N(2R).
\end{equation}

Next, by Lemma \ref{lem decay in good set}, for any $\textbf{k}\notin \mathcal{B}_L$,
\[\sup_{Q_1(\textbf{k})}|\nabla u| \leq CL^{-\frac{p+1}{p-1}}.\]
Therefore
\begin{equation}\label{energy estimate in good}
	\int_{Q_{R}(0)\setminus \widehat{\mathcal{B}}_L }|\nabla u|^2 \leq CL^{-2\frac{p+1}{p-1}} |Q_R(0)| \leq CL^{-2\frac{p+1}{p-1}} R^n.
\end{equation}
Summing up \eqref{energy estimate in bad neighborhod} and \eqref{energy estimate in good}, we obtain \eqref{recursive relation 3}. 
\end{proof}

With all these preparations, we are ready to prove our main result.
\begin{proof}[Proof of Theorem \ref{main1}]
		If $\mathcal{B}=\emptyset$, a direct application of Lemma \ref{lem decay in good set} shows that $u$ is a constant. Hence we assume this is not the case, and we want to establish a quadratic growth bound on $E(R)$ by an iteration with \eqref{starting point} as a starting point.

We will assume without loss of generality that $p>1$. Combining \eqref{recursive relation 1}, \eqref{recursive relation 2} and \eqref{recursive relation 3}, we get
\begin{equation}\label{recursive relation}
	E(R)\leq CL^nR^{-2}E(4R)+CL^{-2\frac{p+1}{p-1}}R^n.
\end{equation}

Let us define  $\alpha_0=0$  and
\begin{equation}\label{iteration}
	\alpha_{k+1}=\left(\alpha_k+2\right)\frac{2(p+1)}{n(p-1)+2(p+1)}.
\end{equation}
Since $p\geq 1$, a simple induction argument tells us that the sequence $\{\alpha_{k}\}$ is increasing and
\begin{equation}
 \label{alp}\alpha_{k}\rightarrow\frac{4(p+1)}{n(p-1)}\quad\text{as}\quad k\rightarrow+\infty.
\end{equation}

We  claim that for any $k=0, \dots$,  there exist $C_k$ such that
\begin{equation}\label{estiE}
E(R)\leq C_kR^{n-\alpha_k}, \quad \forall R>1.\end{equation}
The case of  $k=0$ is just\eqref{starting point}. To see that \eqref{estiE} is indeed true for all $k$, we simply choose $$L=R^{\frac{(2+\alpha_k)(p-1)}{n(p-1)+2(p+1)}}$$ in \eqref{recursive relation} and use an induction argument again.

Now if $p<p^*(n)$, then we can check directly that
\[\frac{4(p+1)}{n(p-1)}>n-2.\]
Therefore, in view of (\ref{alp}), there exists an index $k_0$ such that  $\alpha_{k_0}\geq n-2$. It then follows from (\ref{estiE}) that there exists a $\beta\leq 2$ such that
\[E(R)\leq CR^\beta,  \quad \forall R>1.\]
With this estimate at hand, taking a standard log cut-off function in the Sternberg-Zumbrun inequality (as in \cite{Ambrosio} or \cite{Gh-Gui}), we deduce that  $u$ is one dimensional. But since we have assumed {\bf(H3)}, $u$ must be a constant. 
\end{proof}

With Theorem \ref{main1} at hand, we can prove Corollary \ref{cormain1} in the same spirit as in \cite{Dupaigne-Farina}.
\begin{proof}[Proof of Corollary \ref{cormain1}]
By the same proof of   \cite[Theorem 4]{Dupaigne-Farina}, we know that the function $v:=\lim_{x_n\to +\infty}u$ is a positive bounded stable solution of the equation $$
-\Delta v=f(v),\quad\text{in}\quad\mathbb{R}^{n-1}.
$$ Applying Theorem \ref{main1}, we find that $v$ is a constant. Then still following the proof  of   \cite[Theorem 4]{Dupaigne-Farina}, we conclude that $u$ is one dimensional.
\end{proof}


\bibliographystyle{plain}
\bibliography{lwww-reference}

\end{document}